\newtheorem{thm}{Theorem}[section]
\newtheorem{rem}[thm]{Remark}
\newtheorem{defi}[thm]{Definition}
\newtheorem{ex}[thm]{Example}
\newtheorem{lem}[thm]{Lemma}
\newtheorem{prop}[thm]{Proposition}
\def\NZQ{\Bbb}
\def\RR{{\NZQ R}}
\def\CC{{\NZQ C}}
\def\SS{{\NZQ S}}
\def\TT{{\NZQ T}}
\def\ml{\mathcal{C}}
\def\ml1{\mathcal{C}^1}
\def\mlb1{\mathcal{C}_{b}^{1}}
\def\frk{\frak}
\def\Phi{{\frk n}}
\def\rank{{\rm rank}}
\def\A{{\mathcal A}}
\def\B{{\mathcal B}}
\newcommand{\Zt}[0]{\mathcal Z}
  \definecolor{colore}{cmyk}{0,1,0.6,0}
  \definecolor{coloregen}{cmyk}{0.7,0,1,0}
  \definecolor{coloresimo}{cmyk}{1,0.6,0,0}
  \definecolor{colore}{cmyk}{0,0,0,1}
  \definecolor{coloregen}{cmyk}{0,0,0,1}
  \definecolor{coloresimo}{cmyk}{0,0,0,1}
\title{A linear condition for non-very generic discriminantal arrangements}
\author{Simona Settepanella}
\author{So Yamagata}
\address[1]{Department of Economics and Statistics, Torino University, Torino, Italy}
\email{simona.settepanella@unito.it}
\address[2]{%
Department of Mathematics,
Hokkaido University, Sapporo, Japan.}
\email{so.yamagata@math.sci.hokudai.ac.jp}
\thanks{The second author was supported by the Program for Leading Graduate Schools (Hokkaido University Ambitious
Leader's Program) and JSPS Research Fellowship for Young Scientists Grant Number 20J10012.}
\subjclass{ 52C35 05B35 05C99}
\keywords{Discriminantal arrangements, realizable matroids, combinatorics of arrangements}
\begin{document}

\maketitle

\begin{abstract}
The discriminantal arrangement is the space of configurations of $n$ hyperplanes in generic position in a $k$ dimensional space (see \cite{MS}). Differently from the case $k=1$ in which it corresponds to the well known braid arrangement, the discriminantal arrangement in the case $k>1$ has a combinatorics which depends from the choice of the original $n$ hyperplanes. It is known that this combinatorics is constant in an open Zarisky set $\Zt$, but to assess weather or not $n$ fixed hyperplanes in generic position belongs to $\Zt$ proved to be a nontrivial problem. Even to simply provide examples of configurations not in $\Zt$ is still a difficult task. In this paper, moving from a recent result in \cite{SSc}, we define a \textit{weak linear independency} condition among sets of vectors which, if imposed, allows to build configurations of hyperplanes not in $\Zt$. We provide $3$ examples.
\end{abstract}

\section{Introduction}

In 1989 Manin and Schechtman (\cite{MS}) introduced the {\it discriminantal arrangement} $\B(n,k,\A^0)$ which hyperplanes consist of the non-generic parallel translates of the generic arrangement $\A^0$ of $n$ hyperplanes in $\CC^k$. $\B(n,k,\A^0)$ is a generalization of the braid arrangement (\cite{OT}) with which $\B(n,1)=\B(n,1,\A^0)$ coincides. \\
These arrangements, which have several beautiful relations with diverse problems (see, for instance Kapranov-Voevodsky \cite{KV3}, \cite{KV1},\cite{KV2}, the vanishing of cohomology of bundles on toric varieties \cite{Per}, the representations of higher braid groups \cite{Koh}) proved to be not only very interesting, but also quite tricky. Indeed, differently from the braid arrangement $\B(n,1)$, their combinatorics depends on the original arrangement $\A^0$ if $k>1$. This generated few misunderstanding over the years.\\
Manin and Schechtman introduced the discriminantal arrangement in order to model higher Bruhat orders. To do so, they only used the combinatorics of $\B(n,k,\A^0)$ when $\A^0$ varies in an open Zarisky set $\Zt$. In 1991 Ziegler showed ( see Theorem 4.1 in \cite{Zie} ) that their description was not complete and that a slightly different construction was needed. In particular in \cite{FZ} Felsner and Ziegler showed that, in the representable case, what was missed in Manin and Schechtman construction was the part coming from the case $\A^0 \notin \Zt$.\\
As pointed out by Falk in 1994 (\cite{Falk}), the construction in \cite{MS} led to the misunderstanding that the intersection lattice of  $\B(n,k,\A^0)$ was independent of the arrangement $\A^0$ ( see, for instance, \cite{Orl}\cite{OT}, or \cite{Law}). In \cite{Falk} Falk shows that this was not the case, providing an example of a combinatorics of $\B(6,3,\A^0)$ different from the one presented by Manin and Schechtman. Nevertheless Falk too was not aware that such an example already existed in \cite{Crapo}.\\
Actually, Crapo already introduced the discriminantal arrangement in 1985 with the name of \textit{geometry of circuits} (see \cite{Crapo}). In a more combinatorial setting, he defined it as the matroid $M(n,k, \mathcal{C})$ of circuits of the configuration $\mathcal{C}$ of $n$ generic points in $\RR^k$. The circuits of the matroid $M(n,k, \mathcal{C})$ are the hyperplanes of $\B(n,k,\A^0),$ when $\A^0$ is the  arrangement of the hyperplanes in $\RR^k$ orthogonal to the vectors joining the origin with the $n$ points in $\mathcal{C}$ ( for further development see \cite{CR} ). In this paper, Crapo provides also an example of an arrangement $\A^0$ of $6$ lines in the real plane for which the combinatorics of $\B(6,2,\A^0)$ in rank $3$ is different from the one described in \cite{MS}. \\
In 1997 Bayer and Brandt (see \cite{BB}) cast a light on this difference naming \textit{very generic} the arrangements $\A^0 \in \Zt$ ( the one simply called generic in \cite{MS}) and non very generic the others. They conjectured a description of the intersection lattice of $\B(n,k,\A^0), \A^0 \in \Zt$, subsequently proved by Athanasiadis in 1999 in \cite{Atha} (as far as it is known to the authors, this is the first paper on the literature on discriminantal arrangements in which there is a reference to the Crapo's paper \cite{Crapo}). It is worthy to notice that Bayer and Brandt mentioned in \cite{BB} that, in their opinion, a candidate for a very generic arrangement could have been the cyclic arrangement. In Subsection \ref{subse:simple}, by means of the Theorem 4.1 in \cite{SSY1}, we show that this is not the case since there are cyclic arrangements which are non-very generic.\\
Moreover even if Athanasiadis proved that the combinatorics of $\B(n,k,\A^0), \A^0 \in \Zt$ can be described by opportunely defined sets of subsets of $\{1,\ldots,n\}$, the contrary is not true in general. In Section \ref{sec:example}, by means of our main result, we provide  two examples of non-very generic arrangements $\A^0$ for which the combinatorics of $\B(n,k,\A^0)$ satisfies Bayer-Brandt-Athanasiadis numerical properties (see Examples \ref{ex:MS(16,11)} and  \ref{ex:MS(10,3)}).
We point out that to provide such examples is a nontrivial result since, until very recently, 1985 Crapo's and 1997 Falk's examples were, essentially, the only known examples of non-very generic arrangements.\\
Finally, in order to better understand the difficulty behind this problem, we remark that even with the Bayer-Brandt-Athanasiadis description, the enumerative problem of funding the characteristic polynomial of the discriminantal arrangement in the very generic case is nontrivial (see, for instance, \cite{NT}).\\
Recently, following a result in \cite{LS} which completely describes the rank $2$ combinatorics of $\B(n,k,\A^0)$ for any $\A^0$, the authors tried to better understand the combinatorics of the discriminantal arrangement (see \cite{SSY1}, \cite{SSY2}, \cite{SSc}) and its connection with special configurations of points in the space (see \cite{DPS}, \cite{SS}).  In particular in \cite{SSc} the authors generalize the dependency condition given in \cite{LS} providing a sufficient condition for the existence, in rank $r > 2$, of non-very generic intersections, i.e. intersections which do not appear in $\B(n,k,\A^0), \A^0 \in \Zt$.\\ 
In this paper, moving from the construction in \cite{SSc}, we introduced the notion of \textit{weak linear independency} among sets of vectors proving that if there are vectors in the hyperplanes of $\A^0$ which form weakly linearly independent sets, then $\A^0$ is non-very generic. This result allowed to build several examples of non-very generic arrangements in higher dimension (see also \cite{So}).\\  
The content of the paper is the following. In Section \ref{sec:prelim}, we recall the definition of discriminantal arrangement and, following \cite{SSc}, the definition of \textit{simple} intersection, $K_\TT$-translated and $K_\TT$-configuration. In Section \ref{sec:vect}, we introduce the notion of $K_\TT$-vector sets, weak linear independency and we prove our main result, Theorem \ref{thm:main2}. In the last section we provide three examples of non-very generic arrangements obtained by imposing the condition stated in Theorem \ref{thm:main2}.

\section{Preliminaries}\label{sec:prelim}
\subsection{Discriminantal arrangement} Let $\A^0 = \{ H_1^0, \dots, H_n^0 \}$ be a central arrangement in $\CC^k, k<n$ such that any $m$ hyperplanes intersect in codimension $m$ at any point except for the origin for any $m \leq k$. We will call such an arrangement a \textit{central generic\footnote{Here we use the word \textit{generic} to stress that $\A^0$ admits a translated which is a generic arrangement.} arrangement}. The space $\SS[\A^0]$ (or simply $\SS$ when dependence on $\A^0$ is clear or not essential) will denote the space of parallel translates of $\A^0$, that is the space of the arrangements $\A^t= \{ H_1^{x_1}, \dots, H_n^{x_n} \}$, $t = (x_1, \dots, x_n) \in \CC^n$, $H_i^{x_i} = H_i^0 + \alpha_i x_i$, $\alpha_i$ a vector normal to $H_i^0$. There is a natural identification of $\SS$ with the $n$-dimensional affine space $\CC^n$ such that the arrangement $\A^0$ corresponds to the origin. In particular, an ordering of hyperplanes in $\A^0$ determines the coordinate system in $\SS$ (see \cite{LS}). \\
The closed subset of $\SS$ formed by the translates of $\A^0$ which fail to form a generic arrangement is a union of hyperplanes $D_L \subset \SS$ (see \cite{MS}). Each hyperplane $D_L$ corresponds to a subset $L = \{ i_1, \dots, i_{k+1} \} \subset$  [$n$] $\coloneqq \{ 1, \dots, n \}$ and it consists of $n$-tuples of translates of hyperplanes $H_1^0, \dots, H_n^0$ in which translates of $H_{i_1}^0, \dots, H_{i_{k+1}}^0$ fail to form a generic arrangement. The arrangement $\B(n, k, \A)$ of hyperplanes $D_L$ is called $discriminantal$ $arrangement$ and has been introduced by Manin and Schechtman in \cite{MS}.\\
It is well known (see, among others \cite{Crapo},\cite{MS}) that there exists an open Zarisky set $\mathcal{Z}$ in the space of (central) generic arrangements of $n$ hyperplanes in $\CC^k$, such that the intersection lattice of the discriminantal arrangement $\mathcal{B}(n,k,\A)$ is independent from the choice of the arrangement $\A \in  \mathcal{Z}$. Accordingly to Bayer and Brandt in \cite{BB} we will call the arrangements $\A \in  \mathcal{Z}$ \textit{very generic} and \textit{non-very generic} the others.

\subsection{Simple intersections}\label{subse:simple} According to \cite{SSc} we call an element $X$ in the intersection lattice of the discriminantal arrangement $\mathcal{B}(n,k,\A)$ a \textbf{simple} intersection if $$X=\bigcap_{i=1}^r D_{L_i}, |L_i| = k+1, \mbox{ and } \bigcap_{i \in I}D_{L_i} \neq D_S, \mid S \mid > k+1 \mbox{ for any } I \subset [r], \mid I \mid \geq 2 \quad .$$ We call multiplicity of the simple intersection $X$ the number $r$ of hyperplanes intersecting in $X$. In \cite{SSc} authors proved that the following Proposition holds.
\begin{prop}\label{pro:main}If the intersection lattice of the discriminantal arrangement $\mathcal{B}(n,k,\A)$ contains a simple intersection of rank strictly less than its multiplicity, then $\A$ is non-very generic.
\end{prop}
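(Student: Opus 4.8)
The plan is to establish the contrapositive: assuming $\A$ is very generic, I will show that every simple intersection $X=\bigcap_{i=1}^{r}D_{L_i}$ has $\rank X=r$ (it is automatically at most $r$, being cut out by $r$ hyperplanes). First I would pass to the linear-algebra model. For a fixed central generic $\A^0$ with chosen normal vectors $\alpha_1,\dots,\alpha_n$, each $D_L\subset\SS\cong\CC^n$ is cut out by a single linear equation whose support is exactly the coordinate set $L$, all of whose coefficients on $L$ are nonzero; this is the standard computation of \cite{LS} and uses the genericity of $\A^0$. Writing $\nu_L\in\CC^n$ for the corresponding covector, $\rank\bigl(\bigcap_{i\in I}D_{L_i}\bigr)$ equals the rank of the matrix with rows $\{\nu_{L_i}\}_{i\in I}$. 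Over the irreducible variety $U$ of central generic arrangements (an open subvariety of $(\CC^k\setminus\{0\})^n$, hence irreducible) these covectors vary algebraically, so for each of the finitely many collections $\mathcal{L}$ of $(k+1)$-subsets of $[n]$ and each of its subcollections the locus on which the associated rank is maximal is Zariski open and dense in $U$; let $U_0$ be their (dense open) common intersection. Since $\Zt$ is also dense open in the irreducible $U$, the two sets meet, and at a common point every rank is maximal while the combinatorics is the generic one; hence the generic combinatorics already has all ranks maximal. Consequently $\Zt\subseteq U_0$, so for very generic $\A$ every $\rank\bigl(\bigcap_{i\in I}D_{L_i}\bigr)$ is maximal and in particular whether $X$ is simple depends only on $\mathcal{L}=\{L_1,\dots,L_r\}$. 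It therefore suffices to prove: if $X=\bigcap_{i=1}^{r}D_{L_i}$ with all $|L_i|=k+1$ is simple in this generic combinatorics, then the maximal value of $\rank\bigl(\bigcap_{i=1}^{r}D_{L_i}\bigr)$ over $U$ equals $r$.

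For this combinatorial core I would invoke Athanasiadis' theorem \cite{Atha} establishing the Bayer--Brandt conjecture \cite{BB}, which gives the explicit description of the generic intersection lattice: a flat of rank $m$ is encoded by a family of subsets $T_1,\dots,T_s\subseteq[n]$, each of size $\ge k+1$, as $\bigcap_{j}D_{T_j}$ with $D_T:=\bigcap_{L\subseteq T,\,|L|=k+1}D_L$ and $m=\sum_{j}(|T_j|-k)$. One then shows that a flat which is simple of multiplicity $r$ has canonical family consisting of exactly $r$ sets, each of cardinality $k+1$: a $T_j$ with $|T_j|>k+1$ would exhibit, among the $D_{L_i}$ defining $X$, a sub-intersection equal to $D_{T_j}$ with $|T_j|>k+1$, contradicting simplicity — once one has checked that the $D_{L_i}$ realizing a simple $X$ are precisely its canonical generators and that their number is its multiplicity. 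Granting this, the rank formula gives $\rank X=\sum_{j=1}^{r}(|T_j|-k)=r$, which is exactly what the reduction demanded.

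The hard part is this last bridge between the hands-on definition of ``simple intersection of multiplicity $r$'' and Athanasiadis' canonical indexing of flats: one must rule out that a simple $X$ is secretly an intersection of fewer, larger $D_T$'s, and translate the hypothesis $\bigcap_{i\in I}D_{L_i}\ne D_S$ for $|S|>k+1$ into the combinatorial inequality $\bigl|\bigcup_{i\in I}L_i\bigr|\ge |I|+k$ for all $\emptyset\ne I\subseteq[r]$, which is precisely the condition under which Athanasiadis' formula forces rank $r$. An alternative that sidesteps \cite{Atha} would be an induction on $r$, adjoining $D_{L_1},\dots,D_{L_r}$ one at a time and showing that each new hyperplane fails to contain the preceding intersection, using as base case the rank-$2$ classification of \cite{LS} (for very generic $\A$, $D_L\cap D_{L'}$ drops rank exactly when $|L\cap L'|\ge k$, the degeneracy then being a $D_{L\cup L'}$ excluded by simplicity); but the inductive step again requires knowing that any sub-collection violating $\bigl|\bigcup_{i\in I}L_i\bigr|\ge |I|+k$ is forced onto some $D_S$, so the same obstacle recurs.
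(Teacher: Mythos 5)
Two preliminary remarks. First, the paper does not actually prove Proposition \ref{pro:main}: it is quoted from \cite{SSc}, so the comparison is with the argument there, which, like yours, goes through the contrapositive and Athanasiadis's theorem \cite{Atha}. Second, your opening reduction (irreducibility of the parameter space, semicontinuity of rank, intersecting the maximal-rank locus with $\Zt$) is correct but superfluous: Athanasiadis's description applies verbatim to every very generic arrangement, so there is no need to first show that very generic arrangements realize the maximal possible rank of each $\bigcap_{i\in I}D_{L_i}$; you can work directly with a fixed $\A\in\Zt$.

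The genuine problem is that the decisive step is left as an acknowledged gap: you assert, conditionally (``once one has checked \dots''), that a simple intersection of multiplicity $r$ has canonical family consisting of $r$ sets of size $k+1$, and then declare this bridge the hard unresolved part, proposing a translation into the inequality $|\bigcup_{i\in I}L_i|\ge |I|+k$ that is neither needed nor quite the right statement. So as written the proposal is not a complete proof. The missing check, however, is short and does not require any new combinatorial inequality. By the paper's definition the multiplicity of a simple $X$ is the number of \emph{all} hyperplanes of $\B(n,k,\A)$ containing $X$, i.e.\ the atoms of the intersection lattice below $X$; and Athanasiadis's isomorphism is the natural map sending a canonical family $\{T_1,\dots,T_s\}$ to $\bigcap_j D_{T_j}$, hence label-preserving on atoms, so the hyperplanes containing $X$ are exactly the $D_L$ with $L\subseteq T_j$ for some $j$. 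If some $|T_j|>k+1$, set $I=\{i\colon L_i\subseteq T_j\}$: since every $(k+1)$-subset of $T_j$ occurs among the $L_i$, one gets $\bigcap_{i\in I}D_{L_i}=\bigcap_{L\subseteq T_j,\,|L|=k+1}D_L=D_{T_j}$ with $|I|=\binom{|T_j|}{k+1}\ge 2$ and $|T_j|>k+1$, contradicting simplicity. Hence every $|T_j|=k+1$, so the atoms below $X$ are precisely the $T_j$ themselves, giving multiplicity $s$, while $\rank X=\sum_j(|T_j|-k)=s$; thus for very generic $\A$ rank equals multiplicity for every simple intersection, and the contrapositive is exactly the Proposition. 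With this paragraph inserted (and the first reduction deleted), your argument becomes a correct proof along essentially the same lines as the one in \cite{SSc}; without it, the key claim is only conjectured, not proved.
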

\noindent
It is nontrivial to assess whether or not an arrangement is very generic. For example,
in \cite{BB} Bayer and Brandt guessed that the cyclic arrangement could have been a good candidate to build very generic arrangements. We can now show that this is not true in general. For instance, the cyclic arrangement $\A^0 \in \RR^3$ with hyperplanes normal to the vectors $\alpha_i=(1,t_i,t_i^2),(t_1,t_2,t_3,t_4,t_5,t_6)=(1,-1,a,-a,b,-b), a,b\neq 1, a\neq b$, is generic but non-very generic. Indeed the vectors $\alpha_1 \times \alpha_2$, $\alpha_3 \times \alpha_4$ and $\alpha_5 \times \alpha_6$ are linearly dependent and by Theorem 4.1 in \cite{SSY1} this is equivalent to the intersection $X=D_{\{1,2,3,4\}}\cap D_{\{1,2,5,6\}} \cap D_{\{3,4,5,6\}}$ be a simple intersection of multiplicity $3$ in rank $2$. By Proposition \ref{pro:main}, we get that $\A^0$ is non-very generic.

\subsection{$\mathbf{K_{\TT}}$-translated and $\mathbf{K_{\TT}}$-configurations}\label{sub:KT}
Fixed a set $\TT = \{ L_1, \dots, L_r \}$ of subsets $L_i \subset [n]$, $|L_i| = k+1$, for any arrangement $\A=\{H_1,\ldots, H_n\}$ translated of $\A^0$ we will denote by $P_i = \bigcap_{p \in L_i} H_p$ and $H_{i,j} = \bigcap_{p \in L_i \cap L_j} H_p$. Notice that $P_i$ is a point if and only if $\A \in D_{L_i}$, it is empty otherwise.
Following \cite{SSc} we will call the set $\TT$ an $r$-\textbf{set} if the conditions
 \begin{equation}\label{eq:proper1}
 \bigcup_{i=1}^r L_i = \bigcup_{i \in I} L_i  \quad \mbox{ and} \quad L_i \cap L_j \neq \emptyset
 \end{equation}
 are satisfied for any subset $I \subset [r], \mid I \mid=r-1$ and any two indices $1 \leq i < j \leq r$.\\
Given an $r$-set $\TT$ authors in \cite{SSc} defined:\\
\paragraph{$\mathbf{K_{\TT}}$-\textbf{translated}} A translated $\A=\{H_1, \ldots, H_n\}$ of $\A^0$ will be called $\mathbf{K_{\TT}}$ or $\mathbf{K_{\TT}}$-translated if each point
$P_i=\bigcap_{p \in L_i} H_p \neq \emptyset$ is the intersection of exactly the $k+1$ hyperplanes indexed in $L_i$ for any $L_i \in \TT$. 
\paragraph{$\mathbf{K_{\TT}}$-\textbf{configuration} $K_\TT(\A)$} Given a $\mathbf{K_{\TT}}$-translated $\A$, the complete graph having the points $P_i$ as vertices and  the vectors $P_iP_j$ as edges will be called \textbf{$K_{\TT}$-configuration} and denoted by $K_\TT(\A)$. \\

\section{An algebraic condition for non-very genericity}\label{sec:vect}
The discriminantal arrangement $\B(n,k,\A^0)$ is not essential arrangement of center $D_{[n]}=\bigcap_{L\subset n, \mid L \mid=k+1}D_L \simeq \CC^k$. The center is formed by all translated $\A^t$ of $\A^0$ which are central arrangements. If we consider its essentialization $ess(\B(n,k, \A^0))$ in $\CC^{n-k} \simeq \SS / D_{[n]}$, an element $\A^t \in ess(\B(n,k, \A^0))$ will corresponds uniquely to a translation $t \in \CC^n/ C \simeq \CC^{n-k}$, $C=\{t \in \CC^n \mid \A^t \mbox{ is central} \}$. The following proposition arises naturally.
\begin{prop}\label{defi:indepK_TT} Let $\A^0$ be a generic central arrangement of $n$ hyperplanes in $\CC^k$. Translations $\A^{t_1}, \dots, \A^{t_d}$ of $\A^0$ are linearly independent vectors in $\SS / D_{[n]}\simeq \CC^{n-k}$ if and only if $t_1,\ldots, t_d$ are linearly independent vectors in $\CC^n/C$. 
\end{prop}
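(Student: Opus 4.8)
The plan is to unwind the definitions and reduce the statement to a purely linear-algebraic fact about the natural quotient map. Recall that $\SS \simeq \CC^n$ with $\A^t$ corresponding to $t$, and that the center $D_{[n]}$ of $\B(n,k,\A^0)$ consists of exactly those translates $\A^t$ which are again central arrangements; under the identification $\SS \simeq \CC^n$ this subspace is precisely $C = \{t \in \CC^n \mid \A^t \text{ is central}\}$. Thus the identification $\SS \simeq \CC^n$ descends to a linear isomorphism $\SS/D_{[n]} \simeq \CC^n/C$ carrying the class of $\A^{t}$ to the class of $t$. So the first step is simply to record that the quotient of the linear identification $\SS\cong\CC^n$ by the subspace $D_{[n]}\leftrightarrow C$ is itself a linear isomorphism $\SS/D_{[n]}\xrightarrow{\ \sim\ }\CC^n/C$, $[\A^t]\mapsto [t]$.

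Next I would invoke the elementary fact that a linear isomorphism preserves and reflects linear independence: if $\varphi\colon V \to W$ is an isomorphism of vector spaces, then vectors $v_1,\dots,v_d \in V$ are linearly independent if and only if $\varphi(v_1),\dots,\varphi(v_d)$ are linearly independent in $W$. Applying this with $\varphi$ the isomorphism $\SS/D_{[n]}\simeq \CC^n/C$ from the previous paragraph, $v_i = [\A^{t_i}]$, and $\varphi(v_i) = [t_i]$, yields exactly the claimed equivalence: $\A^{t_1},\dots,\A^{t_d}$ are linearly independent in $\SS/D_{[n]}\simeq\CC^{n-k}$ if and only if $t_1,\dots,t_d$ are linearly independent in $\CC^n/C$.

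The only point requiring any care — and the closest thing to an obstacle — is justifying that $D_{[n]}$ corresponds to $C$ under $\SS\simeq\CC^n$, i.e.\ that the center of the discriminantal arrangement is exactly the locus of central translates, with $\dim C = k$ so that $\CC^n/C \simeq \CC^{n-k}$. This is essentially the content of the paragraph preceding the proposition ($D_{[n]}=\bigcap_{|L|=k+1}D_L\simeq\CC^k$ is formed by all central translates $\A^t$), so it can simply be cited; one may add the short remark that $t\in C$ iff every $k+1$ of the translated hyperplanes $H_i^{x_i}$ still pass through a common point iff $\A^t\in D_L$ for all $L$ with $|L|=k+1$ iff $\A^t\in D_{[n]}$, which identifies the two subspaces. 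Once this identification is in hand the proposition is immediate, and the proof is complete.
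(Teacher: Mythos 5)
Your argument is correct and coincides with what the paper intends: the paper gives no explicit proof (the proposition is stated as ``arising naturally'' from the identification $\SS/D_{[n]}\simeq\CC^n/C$ described in the preceding paragraph), and your write-up simply makes that identification and the fact that linear isomorphisms preserve linear independence explicit, including the only delicate point that $\A^t$ is central precisely when it lies in every $D_L$ with $|L|=k+1$ (which follows from genericity of $\A^0$). Nothing further is needed.
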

\noindent
Given $\A^{t_1}, \dots, \A^{t_d}$ $K_{\TT}$-translated of $\A^0$, we will say that the $K_\TT$-configurations $K_\TT(\A^{t_i})$ are independent if $\A^{t_i}, i=1,\dots, d$ are.
\begin{center}
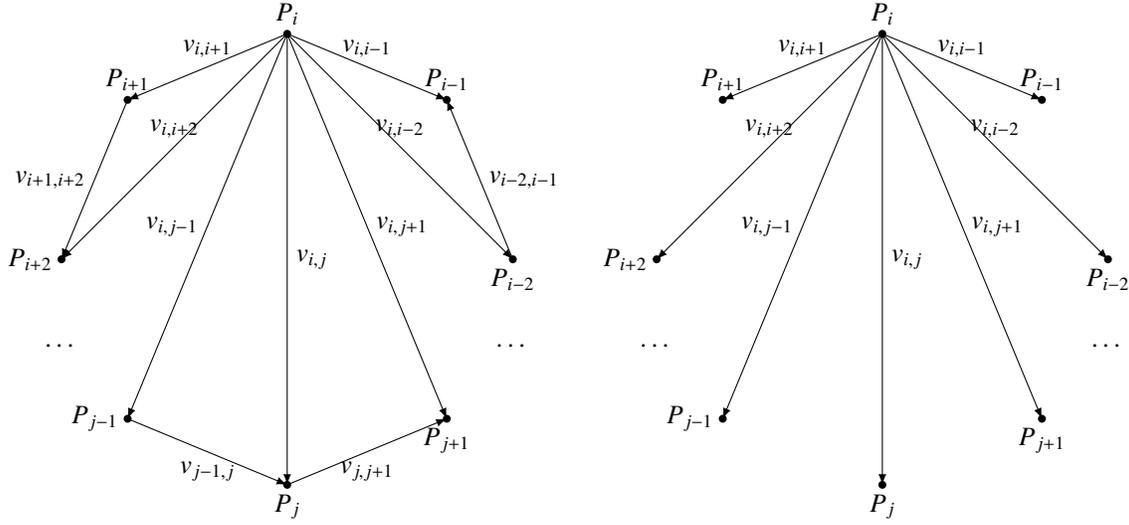
\begin{figure}[h]
\begin{tikzpicture}
\coordinate [label=above:$P_i$] (0) at (0,3);
\coordinate [label=above:$P_{i+1}$] (1) at ({-3/sqrt(2)},{3/sqrt(2)});
\coordinate [label=left:$P_{i+2}$] (2) at (-3,0);
\coordinate [label=below:$\dots$] (3) at (-3,-1);
\coordinate [label=left:$P_{j-1}$] (4) at ({-3/sqrt(2)},{-3/sqrt(2)});
\coordinate [label=below:$P_j$] (5) at (0,-3);
\coordinate [label=below:$P_{j+1}$] (6) at ({3/sqrt(2)},{-3/sqrt(2)});
\coordinate [label=below:$\dots$] (7) at (3,-1);
\coordinate [label=below:$P_{i-2}$] (8) at (3,0);
\coordinate [label=above:$P_{i-1}$] (9) at ({3/sqrt(2)},{3/sqrt(2)});

\begin{scope}
\draw[-latex] (0) -- node[above] {$v_{i,i+1}$} (1);
\draw[-latex] (0) -- node[above] {$v_{i,i+2}$} (2);
\draw[-latex] (0) -- node[left] {$v_{i,j-1}$} (4);
\draw[-latex] (0) -- node[right] {$v_{i,j+1}$} (6);
\draw[-latex] (0) -- node[above] {$v_{i,i-2}$} (8);
\draw[-latex] (1) -- node[left] {$v_{i+1,i+2}$} (2);
\draw[-latex] (4) -- node[below] {$v_{j-1,j}$} (5);
\draw[-latex] (5) -- node[below] {$v_{j,j+1}$} (6);
\draw[-latex] (8) -- node[right] {$v_{i-2,i-1}$} (9);
\draw[-latex] (0) -- node[above] {$v_{i,i-1}$} (9);
\draw[-latex] (0) -- node[right] {$v_{i,j}$} (5);

\fill (0) circle (1.5pt);
\fill (1) circle (1.5pt);
\fill (2) circle (1.5pt);
\fill (4) circle (1.5pt);
\fill (5) circle (1.5pt);
\fill (6) circle (1.5pt);
\fill (8) circle (1.5pt);
\fill (9) circle (1.5pt);

\end{scope}
\end{tikzpicture} \ \ \ 
\begin{tikzpicture}
\coordinate [label=above:$P_i$] (0) at (0,3);
\coordinate [label=above:$P_{i+1}$] (1) at ({-3/sqrt(2)},{3/sqrt(2)});
\coordinate [label=left:$P_{i+2}$] (2) at (-3,0);
\coordinate [label=below:$\dots$] (3) at (-3,-1);
\coordinate [label=left:$P_{j-1}$] (4) at ({-3/sqrt(2)},{-3/sqrt(2)});
\coordinate [label=below:$P_j$] (5) at (0,-3);
\coordinate [label=below:$P_{j+1}$] (6) at ({3/sqrt(2)},{-3/sqrt(2)});
\coordinate [label=below:$\dots$] (7) at (3,-1);
\coordinate [label=below:$P_{i-2}$] (8) at (3,0);
\coordinate [label=above:$P_{i-1}$] (9) at ({3/sqrt(2)},{3/sqrt(2)});

\begin{scope}
\draw[-latex] (0) -- node[above] {$v_{i,i+1}$} (1);
\draw[-latex] (0) -- node[above] {$v_{i,i+2}$} (2);
\draw[-latex] (0) -- node[left] {$v_{i,j-1}$} (4);
\draw[-latex] (0) -- node[right] {$v_{i,j+1}$} (6);
\draw[-latex] (0) -- node[above] {$v_{i,i-2}$} (8);
\draw[-latex] (0) -- node[above] {$v_{i,i-1}$} (9);
\draw[-latex] (0) -- node[right] {$v_{i,j}$} (5);

\fill (0) circle (1.5pt);
\fill (1) circle (1.5pt);
\fill (2) circle (1.5pt);
\fill (4) circle (1.5pt);
\fill (5) circle (1.5pt);
\fill (6) circle (1.5pt);
\fill (8) circle (1.5pt);
\fill (9) circle (1.5pt);
\end{scope}
\end{tikzpicture}
\caption{Diagonal vectors $v_{i,j}$ and their associated $K_\TT$-vector set.}\label{fig:K_T_vect}
\end{figure}
\end{center}
\subsection{$K_{\TT}$-vector sets}
Let $\A^t, t = (x_1, \dots, x_n)$ be a $K_\TT$-translated of $\A^0$ and $P_i^t$ denote the intersection $\bigcap_{p \in L_i} H^{x_p}_p$. Then to the $K_\TT$-configuration $K_\TT(\A^t)$ corresponds a unique family $\{v^t_{i,j}\}$ of vectors such that $P^t_i+v^t_{i,j}=P^t_j$. Notice that two different $K_\TT$-configurations can define the same family $\{v_{i,j}\}$\footnote{It is unique in the quotient space $\SS / D_{[n]}\simeq \CC^{n-k}$}. With the above notations, we provide the following definition.
\begin{defi}Let $\A^0$ be a central generic arrangement, $\TT$ an $r$-set, $\A^t$ a  $K_\TT$-translated of $\A^0$ and $i_0 \in [r]$ a fixed index. We call $K_\TT$-\textbf{vector set} the set of vectors $\{v^t_{i_0,j}\}_{j\neq i_0}$ which satisfies $P^t_{i_0}+v^t_{i_0,j}=P^t_{j}$ for any  $j\in [r], j \neq i_0$.
\end{defi}
\noindent
Since the vectors $\{v^t_{i,j}\}$ satisfy, by definition, the property $v^t_{k,l} = v^t_{i.l}- v^t_{i,k},$ then the set $\{v^t_{i,j}\}$ is uniquely determined by any $K_\TT$-vector set $\{v^t_{i_0,j}\}_{j\neq i_0}$ (see Figure \ref{fig:K_T_vect}). \\
Given a $K_\TT$-vector set we can naturally define the operation of multiplication by a scalar $$a\{v^t_{i_0,j}\}_{j\neq i_0} \coloneqq \{av^t_{i_0,j}\}_{j\neq i_0}, \quad a \in \CC$$ and sum of two different $K_\TT$-vector sets 
$$\{v^{t_1}_{i_0,j}\}_{j\neq i_0}+\{v^{t_2}_{i_0,j}\}_{j\neq i_0}=\{v^{t_1}_{i_0,j}+v^{t_2}_{i_0,j}\}_{j\neq i_0} \quad .$$
With the above notations and operations, we have the following definition. 

\begin{defi} For a fixed set $\TT$, $d$ different $K_\TT$-vector sets $\{ \{v^{t_h}_{i_0,j}\}_{j\neq i_0} \}_{h=1, \dots, d}$ are weakly linearly independent if and only if for any $a_1,\ldots,a_d \in \CC$ such that
\begin{equation}
\sum_{h=1}^{d} a_h \{v^{t_h}_{i_0,j}\}_{j\neq i_0} = 0,
\end{equation} 
then $a_1=\ldots=a_d=0$. 
\end{defi}
\noindent
The following remark is a key point to prove the connection between the linearly independence of $K_{\TT}$-configurations and the weakly linearly independence of the associated $K_\TT$-vector sets.
\begin{rem}\label{rem:corres} Let $K_\TT(\A^t)$ be the $K_\TT$-configuration of the arrangement $\A^t$, $K_\TT$-translated of $\A^0$. Then for any $c \in \CC$, the $K_\TT$-configuration $K_\TT(\A^{ct})$ is an "expansion" by $c$ of $K_\TT(\A^t)$, that is $v_{i,j}^{ct} = c v_{i,j}^{t}$. This is consequence of the fact that for any $i \in [r]$ the vector $OP^{ct}_i$ joining the origin with the points $P^{ct}_i$ satisfies $OP^{ct}_i=cOP^{t}_i$ by definition of translation. Hence $P^{ct}_iP^{ct}_j=cP^{t}_iP^{t}_j$, i.e. $$v_{i,j}^{ct} = c v_{i,j}^{t} \quad .$$ 
Analogously we have that, if $t_1,t_2 \in \CC^n$ are two translations then $$v_{i,j}^{t_1}+v_{i,j}^{t_2}=v_{i,j}^{t_1+t_2} \quad .$$
\end{rem}
\noindent
We can now prove the main lemma of this section.
\begin{lem}\label{lem:K_TTvec} Let $\A^0$ be a central generic arrangement of $n$ hyperplanes in $\CC^k$ and $\TT=\{L_1, \ldots, L_r\}$ be an $r$-set such that $[n]=\bigcup_{i=1}^r L_i$. The $K_{\TT}$-translated arrangements $\A^{t_1}, \dots, \A^{t_d}$  of  $\A^0$ are linearly independent if and only if their associated $K_\TT$-vector sets $\{ \{v^{t_h}_{i_0,j}\}_{j\neq i_0} \}_{h=1, \dots, d}$ are weakly linearly independent.
\end{lem}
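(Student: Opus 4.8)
The plan is to reduce the statement to a purely linear-algebraic equivalence by combining Proposition \ref{defi:indepK_TT} with the correspondence in Remark \ref{rem:corres}. The point is that linear independence of the translated arrangements $\A^{t_1},\dots,\A^{t_d}$ is, by Proposition \ref{defi:indepK_TT}, linear independence of the translations $t_1,\dots,t_d$ in $\CC^n/C$; so it suffices to show that, for $K_\TT$-translated arrangements, the map sending $t$ (modulo $C$) to its $K_\TT$-vector set $\{v^t_{i_0,j}\}_{j\neq i_0}$ is a well-defined \emph{injective} $\CC$-linear map. Linearity of this map is exactly the content of Remark \ref{rem:corres}: $v^{ct}_{i,j}=cv^t_{i,j}$ and $v^{t_1+t_2}_{i,j}=v^{t_1}_{i,j}+v^{t_2}_{i,j}$, so $a\mapsto$ scalar multiplication and sum of $K_\TT$-vector sets correspond to the operations on translations. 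Once injectivity is in hand, a relation $\sum_h a_h\{v^{t_h}_{i_0,j}\}_{j\neq i_0}=0$ is equivalent to $\{v^{\sum_h a_h t_h}_{i_0,j}\}_{j\neq i_0}=0$, hence (by injectivity) to $\sum_h a_h t_h \in C$, hence (by Proposition \ref{defi:indepK_TT}) to $\A^{\sum a_h t_h}$ being the zero vector in $\SS/D_{[n]}$; so weak linear independence of the $K_\TT$-vector sets matches linear independence of the $\A^{t_h}$ exactly.

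First I would make precise what "well-defined" means here: a priori $v^t_{i,j}$ depends on the actual points $P^t_i$, which live in $\SS$, but as the footnote in the paper notes the family $\{v_{i,j}^t\}$ is canonically defined in $\SS/D_{[n]}\simeq\CC^{n-k}$, since translating all $P^t_i$ by a common element of $D_{[n]}$ does not change the differences $P^t_iP^t_j$. So the assignment $t\bmod C \mapsto \{v^t_{i_0,j}\}_{j\neq i_0}$ is well-defined on $\CC^n/C$. I would then observe that the full family $\{v^t_{i,j}\}_{i,j}$ is recovered from the single $K_\TT$-vector set via $v^t_{k,l}=v^t_{i_0,l}-v^t_{i_0,k}$ (this is the relation noted right after the definition), so no information is lost by recording only $\{v^t_{i_0,j}\}_{j\neq i_0}$; working with the small set or the full family is equivalent.

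The heart of the argument — and the step I expect to be the main obstacle — is \textbf{injectivity}: if $\A^t$ is $K_\TT$-translated and all $v^t_{i_0,j}=0$, then $t\in C$, i.e. $\A^t$ is central. Here is where the hypothesis $[n]=\bigcup_{i=1}^r L_i$ and the $r$-set condition \eqref{eq:proper1} are used. If $v^t_{i_0,j}=0$ for all $j$, then all the points $P^t_j$ coincide, say at a common point $Q$. For each $p\in[n]$, pick (using $[n]=\bigcup L_i$) an index $i$ with $p\in L_i$; then $H_p^{x_p}\ni P^t_i=Q$, so every hyperplane of $\A^t$ passes through the single point $Q$. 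Thus $\A^t$ is a central arrangement (with center $Q$), which by definition means $t\in C$. This shows the linear map $\CC^n/C\to(\CC^{n-k})^{\,r-1}$, $t\mapsto\{v^t_{i_0,j}\}$, has trivial kernel on the locus of $K_\TT$-translates, which is what we need. I would be careful that the argument only requires the \emph{covering} condition $[n]=\bigcup L_i$ (explicitly assumed in the lemma) together with each $P^t_i$ being nonempty, which holds since $\A^{t_h}$ being $K_\TT$-translated forces $\A^{t_h}\in D_{L_i}$ for every $i$, hence each $P^{t_h}_i$ is a point; for a general linear combination $\sum a_h t_h$ one uses instead the already-established linearity to transfer the vanishing statement rather than re-deriving geometry for the combination.

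Assembling the pieces: weak linear independence of $\{\{v^{t_h}_{i_0,j}\}_{j\neq i_0}\}_{h}$ says the only relation $\sum_h a_h\{v^{t_h}_{i_0,j}\}=0$ is trivial; by linearity (Remark \ref{rem:corres}) this left-hand side equals $\{v^{t}_{i_0,j}\}$ with $t=\sum_h a_h t_h$; by injectivity this vanishes iff $t\in C$; by Proposition \ref{defi:indepK_TT} (equivalently its reformulation for translations) $t\in C$ iff $a_1t_1+\dots+a_dt_d$ represents the zero class, i.e. iff the relation among the $\A^{t_h}$ in $\SS/D_{[n]}$ is trivial. So "only the trivial relation among $K_\TT$-vector sets" holds iff "only the trivial relation among the $\A^{t_h}$" holds, which is precisely the claimed equivalence. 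I would close by remarking that the same chain shows more: the $K_\TT$-vector sets span a space isomorphic to the span of the $\A^{t_h}$, so dimensions are preserved, though only the independence statement is needed here.
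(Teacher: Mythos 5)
Your proposal is correct and follows essentially the same route as the paper's proof: reduce to linear independence of the translations via Proposition \ref{defi:indepK_TT}, use the linearity of $t\mapsto\{v^t_{i_0,j}\}$ from Remark \ref{rem:corres}, and observe that vanishing of a $K_\TT$-vector set forces all points $P_j$ to coincide, hence (since $[n]=\bigcup_i L_i$) the arrangement is central and the translation lies in $C$. Your only addition is to spell out explicitly the well-definedness modulo $D_{[n]}$ and the role of the covering hypothesis in the injectivity step, which the paper leaves implicit.
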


\begin{proof}
By definition, $\A^{t_1}, \dots, \A^{t_d}$ are linearly independent if and only if the translations $t_1,\ldots,t_d$ are linearly independent vectors in $\CC^n/C$. Let's consider a linear combination $\sum_{h=1}^d a_h t_h$ of the vectors $t_h$ and the translated arrangements $\A^{a_ht_h}$. By Remark \ref{rem:corres} we have that the $K_{\TT}$-vector sets associated to $\A^{a_ht_h}$ satisfy the equalities:
$$
 \{v^{\sum_{h=1}^{d} a_ht_h}_{i_0,j}\}_{j\neq i_0}=\sum_{h=1}^{d} \{v^{a_h t_h}_{i_0,j}\}_{j\neq i_0}=\sum_{h=1}^{d} a_h \{v^{t_h}_{i_0,j}\}_{j\neq i_0} \quad .
$$
Hence $\sum_{h=1}^{d} a_h \{v^{t_h}_{i_0,j}\}_{j\neq i_0}=0$ if and only if $v^{\sum_{h=1}^{d} a_ht_h}_{i_0,j}=0$ for any $j$, that is $P_{i_0}^{\sum_{h=1}^{d} a_ht_h} \equiv P_j^{\sum_{h=1}^{d} a_ht_h}$. This is equivalent to $\A^{\sum_{h=1}^{d} a_ht_h}$ be a central arrangement of center $P_{i_0}^{\sum_{h=1}^{d} a_ht_h}$, i.e. $\sum_{h=1}^d a_h t_h \in C$ and the statement follows from Proposition \ref{defi:indepK_TT}.
\end{proof}
\noindent
The assumption that $\bigcup_{i=1}^r L_i=[n]$ in Lemma \ref{lem:K_TTvec}
is equivalent to consider a subset $\A'^0 \subset \A^0$ which only contains the hyperplanes indexed in the $\bigcup_{i=1}^r L_i \subset [n]$ in the more general case. Indeed if a (central) generic arrangement $\A^0$ contains a subarrangement $\A'^0$ which is non-very generic then $\A^0$ is obviously non-very generic. Analogously, if there exists a restriction arrangement $\A^{Y_{\A'}} = \{ H \cap Y_{\A'} | H \in \A^0 \setminus \A'\}, Y_{\A'}=\bigcap_{H \in \A'} H$ of $\A^0$ which is non-very generic, then $\A^0$ is non-very generic. The following main theorem of this Section follows.

\begin{thm}\label{thm:main2} Let $\A^0$ be a central generic arrangement of $n$ hyperplanes in $\CC^k$. If there exists an $r$-set $\TT=\{L_1, \ldots ,L_r\}$ with $\mid \bigcup_{i=1}^r L_i \mid=m$ and $\rank \bigcap_{p \in \bigcap_{i=1}^r L_i} H_p=y$, which admits $m-y-k-r'$ weakly linearly independent $K_{\TT}$-vector sets for some $r'<r$, then $\A^0$ is non-very generic. 
\end{thm}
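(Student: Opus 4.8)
The plan is to reduce Theorem \ref{thm:main2} to Proposition \ref{pro:main} by exhibiting a simple intersection whose rank is strictly less than its multiplicity, using the weakly linearly independent $K_\TT$-vector sets to control the rank. First I would pass to the relevant subarrangement and restriction: by the discussion immediately preceding the theorem, if $\bigcup_{i=1}^r L_i$ has size $m$ we may replace $\A^0$ by the subarrangement $\A'^0$ of the $m$ hyperplanes indexed in $\bigcup L_i$, and — because $\rank\bigcap_{p\in\bigcap L_i}H_p=y$ — we may further restrict to the flat $Y=\bigcap_{p\in\bigcap_{i=1}^r L_i}H_p$, obtaining a central generic arrangement of $m-?$ hyperplanes in a space of dimension $m-y$ essentially; the upshot is that after essentialization the ambient space $\SS/D_{[m]}$ has dimension $(m-y)-k$ replaced appropriately, so the intended simple intersection $X=\bigcap_{i=1}^r D_{L_i}$ lives in a space where its "expected" rank is $r$ but its actual rank is bounded by the number of independent $K_\TT$-translated directions.

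Next I would identify the multiplicity and the rank. The intersection $X=\bigcap_{i=1}^r D_{L_i}$ has multiplicity $r$ (one needs the $r$-set condition \eqref{eq:proper1}, which guarantees no $D_{L_i}$ is redundant and that $X$ is genuinely an $r$-fold intersection that is not forced to be some larger $D_S$, i.e. that $X$ is simple — this is exactly the combinatorial content pinned down in \cite{SSc}). For the rank: a translate $\A^t$ lies in $X$ iff it is $K_\TT$ (or a degeneration), so the linear span of $X$ inside the essentialization $\SS/D_{[n]}$ is spanned by the vectors $t$ with $\A^t$ $K_\TT$-translated; by Lemma \ref{lem:K_TTvec} a maximal linearly independent such family corresponds to a maximal weakly linearly independent family of $K_\TT$-vector sets. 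Counting dimensions: each $K_\TT$-vector set is determined by $v_{i_0,j}$ for the $r-1$ indices $j\neq i_0$, but the condition that $\A^t\in D_{L_i}$ for all $i$ cuts down the translation space; carefully, the codimension of $X$ is at least $r'$ in the relevant ambient space precisely when there are at most $m-y-k-r'$ independent $K_\TT$-directions, giving $\rank X \le m-y-k-(m-y-k-r') \cdot(\text{something})$ — I would track the bookkeeping so that the hypothesis "$m-y-k-r'$ weakly linearly independent $K_\TT$-vector sets" translates to $\rank X \le r'$.

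Having $\rank X \le r' < r = \mathrm{mult}(X)$, together with the fact that $X$ is a simple intersection, Proposition \ref{pro:main} immediately gives that $\A^0$ is non-very generic, completing the proof. The main obstacle I anticipate is the dimension count in the middle step: making precise the claim that the span of the set of $K_\TT$-translates inside $\SS/D_{[m]}$ (after restriction to $Y$) has dimension exactly equal to the number of independent $K_\TT$-vector sets, and that this number plus $k$ plus $y$ plus the codimension $r'$ of $X$ add up to $m$. One has to be careful that "$K_\TT$-translated" is an open condition and that $X$ also contains degenerate limits; I would argue that the linear span is unchanged by taking closure, since $D_{L_i}$ are honest hyperplanes and the $K_\TT$-locus is Zariski dense in $X$. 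Verifying that the $r$-set axioms \eqref{eq:proper1} are exactly what is needed for $X$ to be simple of multiplicity $r$ is the other point requiring care, but that is essentially recorded in \cite{SSc} and can be cited.
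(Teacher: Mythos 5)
Your overall strategy is the same as the paper's: pass to the subarrangement indexed by $\bigcup_{i=1}^r L_i$ and restrict to $Y$, use Lemma \ref{lem:K_TTvec} to turn weakly independent $K_\TT$-vector sets into linearly independent translates, observe that these translates lie in $X=\bigcap_{i=1}^r D_{L_i}$, and conclude via rank versus multiplicity (Proposition \ref{pro:main}). However, the step you yourself flag as the ``main obstacle'' --- the dimension bookkeeping --- is exactly where the proof lives, and as written it is both inverted and left unresolved. You phrase it as ``the codimension of $X$ is at least $r'$ precisely when there are at most $m-y-k-r'$ independent $K_\TT$-directions,'' but the hypothesis gives a \emph{lower} bound on the number of independent directions, and what you need is the opposite, one-directional estimate: the $m-y-k-r'$ weakly independent $K_\TT$-vector sets yield, by Lemma \ref{lem:K_TTvec}, $m-y-k-r'$ linearly independent translates $\A^{t_1},\dots,\A^{t_{m-y-k-r'}}$ of $\A'^Y$; each of them lies in the linear subspace $ess(X)\subset \SS[\A'^Y]/D_{[m]}\simeq\CC^{m-y-k}$, so $\dim ess(X)\ge m-y-k-r'$, hence $\rank X=\codim ess(X)\le r'<r$. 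This is an elementary containment-of-span argument; no claim that the span of the $K_\TT$-locus is \emph{exactly} $X$, no maximality of the independent family, and no Zariski-density or closure argument is needed (those would only be required to bound $\dim X$ from above, which the theorem never asks for). Your garbled formula ``$\rank X\le m-y-k-(m-y-k-r')\cdot(\text{something})$'' shows the right target but does not supply this argument, so as it stands the decisive inequality is missing rather than proved.

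Two smaller points. First, concerning simplicity and multiplicity of $X$: the paper's own proof also takes for granted that $X$ is a simple intersection of multiplicity $r$ (this is tied to the $r$-set condition and to the definition of $K_\TT$-translated, in which each $P_i$ is the intersection of exactly the $k+1$ hyperplanes indexed in $L_i$), so deferring this to \cite{SSc} is consistent with the paper; in fact the paper's final step only uses $\rank X<r$ for an intersection of $r$ hyperplanes $D_{L_i}$ together with Proposition \ref{pro:main}. Second, your reduction ``we may replace $\A^0$ by $\A'^0$ and restrict to $Y$'' is correct and is exactly the paper's use of the remark preceding the theorem (a non-very generic subarrangement or restriction forces $\A^0$ to be non-very generic); after this reduction the restricted arrangement is essential in $\CC^{m-y}$ and Lemma \ref{lem:K_TTvec} applies because the union of the $L_i$'s is the full index set, via Proposition \ref{defi:indepK_TT}. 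With the corrected inequality inserted, your outline becomes the paper's proof.
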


\begin{proof}Let's consider the subarrangement $\A'$ of $\A^0$ given by the hyperplanes indexed in the $\bigcup_{i=1}^r L_i$ and its essentialization, i.e. the restriction arrangement $\A'^Y$, $Y=\bigcap_{p \in \bigcap_{i=1}^r L_i} H_p$. If $y=\rank~Y$ then the arrangement $\A'^Y$ is a central essential arrangement in $\CC^{m-y}, m=\mid \bigcup_{i=1}^r L_i \mid$. By Lemma \ref{lem:K_TTvec}, if $\A^{t_1},\ldots,\A^{t_{m-y-k-r'}}$ are $K_{\TT}$-translated of $\A'^Y$ associated to the $m-y-k-r'$ independent $K_{\TT}$-vector sets, then $\A^{t_1},\ldots,\A^{t_{m-y-k-r'}}$ are linearly independent vectors in $\SS[\A'^Y] / D_{[m]}\simeq \CC^{m-y-k}$. That is $\A^{t_1},\ldots,\A^{t_{m-y-k-r'}}$ span a subspace of dimension $m-y-k-r'$. On the other hand, by construction, $\A^{t_j}$ are $K_{\TT}$-translated, i.e.  $\A^{t_j}\in ess(X), X=\bigcap_{i=1}^r D_{L_i}$ for any $j=1, \ldots , m-y-k-r'$, that is the space spanned by $\A^{t_1},\ldots,\A^{t_{m-y-k-r'}}$ is included in $ess(X)$. This implies that the simple intersection $ess(X)$ has dimension $d \geq m-y-k-r' > m-y-k-r$ that is its codimension is smaller than $r$, i.e. $\rank~ess(X)<r$ and hence $\rank~X<r$. This implies that $\A'$ is non-very generic and hence $\A^0$ is non-very generic. 
\end{proof}
\noindent
Theorem \ref{thm:main2} allows to build non-very generic arrangements simply imposing linear conditions on vectors $v_{i,j} \in H^0_{i,j}$. This linearity is a non trivial achievement since the conditions to check the (non) very genericity are Pl\"ucker-type conditions. We point out that while Theorem \ref{thm:main2} provides a quite useful tool to build non-very generic arrangements, we are still far away from being able to check whether a given arrangement is very generic or not.\\ 
In the next section we will provide non trivial examples of how to build non-very generic arrangements by means of the Theorem \ref{thm:main2}. 

\section{Examples of non-very generic arrangements}\label{sec:example}
In this section we present few examples to illustrate how to use the Theorem \ref{thm:main2} to construct non-very generic arrangements. To construct the numerical examples we used the software CoCoA-5.2.4 ( see \cite{AM}).

\begin{ex}[$\B(12, 8, \A^0)$ with an intersection of multiplicity 4 in rank 3]\label{ex:MS(12,8)}
Let $L_1 = [12] \setminus \{ 10,11,12 \}, L_2 = [12] \setminus  \{ 7,8,9 \}, L_3 = [12] \setminus\{ 4,5,6 \}$ and $L_4 = [12] \setminus \{ 1,2,3 \}$ be subsets of $[12]$ of $k+1=9$ indices. It is an easy computation that the set $\TT = \{ L_1, L_2, L_3, L_4 \}$ is a $4$-set. 
Let's consider a central generic arrangement $\A^0$ of $12$ hyperplanes in $\CC^8$. In this case $m=n=12, y=0$ and $m-k-r=12-8-4=0$, hence, by Theorem \ref{thm:main2} in order for $\A^0$ to be non-very generic it is enough the existence of just one $K_{\TT}$-vector set $\{v_{1,2},v_{1,3},v_{1,4}\}$, that is the vectors $v_{2,3}=v_{1,3}-v_{1,2} \in \bigcap_{p \in L_2 \cap L_3 \setminus \{ 12 \}} H_p^0$ and $v_{2,4}=v_{1,4}-v_{1,2} \in \bigcap_{p \in L_2 \cap L_4 \setminus \{ 12 \}} H_p^0$ have to belong to $H_{12}^0$ (see Figure \ref{fig:exB(12,8)}). Notice that since $v_{3,4} = v_{2,4} - v_{2,3} \in \bigcap_{p \in L_3 \cap L_4 \setminus \{ 12 \}} H_p^0$, if $v_{2,3}, v_{2,4} \in H_{12}^0$ then $v_{3,4} \in H_{12}^0$. That is all hyperplanes in $\A^0$ can be chosen freely\footnote{Here and in the rest of this section, freely means that we only impose the condition that $\A^0$ is a central generic arrangement. In particular this condition is always taken as given and imposed even if not written.}, but $H_{12}$ which has to contain the vectors $v_{2,3},v_{2,4}$.

\begin{figure}[h]
\begin{minipage}{0.48\textwidth}
\centering
\begin{tikzpicture}
\coordinate (0) at (0,0);
\coordinate (1) at (4,0);
\coordinate (2) at (3,4);
\coordinate (3) at (1,3);

\coordinate (4) at (-1,0);
\coordinate (5) at (-1/2,-2/3);
\coordinate (6) at (-1/3,-1);
\coordinate (7) at (17/4,-1);
\coordinate (8) at (9/2,-1/2);
\coordinate (9) at (5,0);
\coordinate (10) at (4,9/2);
\coordinate (11) at (4,16/3);
\coordinate (12) at (11/4,5);
\coordinate (13) at (4/3,4);
\coordinate (14) at (0,4);
\coordinate (15) at (0,5/2);

\coordinate [label=above:$P_1$] (a) at (-1/4,0);
\coordinate [label=above:$P_2$] (b) at (17/4,0);
\coordinate [label=left:$P_3$] (c) at (3,25/6);
\coordinate [label=above:$P_4$] (d) at (1-1/9,3+1/9);
\coordinate [label=$H_{1,2}^t$] (e) at (-1.3,-0.3);
\coordinate [label=$H_{1,3}^t$] (f) at (-2/3-0.1,-1-0.1);
\coordinate [label=$H_{1,4}^t$] (g) at (-1/4-0.1,-1.6);
\coordinate [label=$\bigcap_{p \in L_2 \cap L_3 \setminus \{ 12 \}} H_p^t$] (h) at (4+0.3,-1.6);
\coordinate [label=$\bigcap_{p \in L_2 \cap L_4 \setminus \{ 12 \}} H_p^t$] (j) at (5.7,-1-0.1);
\coordinate [label=$\bigcap_{p \in L_3 \cap L_4 \setminus \{ 12 \}} H_p^t$] (i) at (5.3,25/6+0.2);

\begin{scope}
\draw[-latex] (0) -- node[below] {$v_{1,2}$} (1);
\draw[-latex] (1) -- node[right] {$v_{2,3}$} (2);
\draw[-latex] (2) -- node[above] {$v_{3,4}$} (3);
\draw[-latex] (0) -- node[left] {$v_{1,4}$} (3);
\draw[-latex] (0) -- node[left] {$v_{1,3}$} (2);
\draw[-latex] (1) -- node[right] {$v_{2,4}$} (3);
\draw (4) -- (9);
\draw (5) -- (11);
\draw (7) -- (12);
\draw (6) -- (13);
\draw[dashed] (8) -- (14);
\draw (10) -- (15);
\end{scope}
\end{tikzpicture}
\caption{$K_\TT$-configuration $K_\TT(\A^t)$ of $\B(12, 8, \A^0).$ $v_{i,j}$ are vectors in $H_{i,j}^0$.}\label{fig:exB(12,8)}
\end{minipage}
\begin{minipage}{0.48\textwidth}
\centering
\begin{tikzpicture}
\coordinate (0) at (-1, 0);
\coordinate (1) at (-1/2, -5/12);
\coordinate (2) at (-1/4,-1);
\coordinate (3) at (1/3,-5/6);
\coordinate (4) at (5/3,-5/6);
\coordinate (5) at (22/10, -4/5);
\coordinate (6) at (27/10, -7/12);
\coordinate (7) at (3,0);
\coordinate (8) at (7/2,17/8);
\coordinate (9) at (7/2,5/2);
\coordinate (10) at (7/2, 35/12);
\coordinate (11) at (33/10, 13/4);
\coordinate (12) at (3/2, 35/8);
\coordinate (13) at (12/10, 24/5);
\coordinate (14) at (8/10, 24/5);
\coordinate (15) at (1/2, 35/8);
\coordinate (16) at (-3/2,15/4);
\coordinate (17) at (-3/2,35/12);
\coordinate (18) at (-3/2, 5/2);
\coordinate (19) at (-3/2,17/8);

\coordinate [label=left:$P_1$] (a) at (0,0.2);
\coordinate [label=right:$P_2$] (b) at (2,0.2);
\coordinate [label=$P_3$] (c) at (3,5/2);
\coordinate [label=right:$P_4$] (d) at (1,4);
\coordinate [label=$P_5$] (e) at (-1,5/2);

\coordinate (A) at (0,0);
\coordinate (B) at (2,0);
\coordinate (C) at (3,5/2);
\coordinate (D) at (1,4);
\coordinate (E) at (-1,5/2);

\coordinate [label=$H_{1,2}^t$] (p) at (-1.3,-0.3);
\coordinate [label=$H_{1,3}^t$] (q) at (-2/3-0.1,-1);
\coordinate [label=$H_{1,4}^t$] (r) at (-1/4-0.1,-1.6);
\coordinate [label=$H_{1,5}^t$] (s) at (0.5, -1.4);
\coordinate [label=$H_{2,3}^t$] (t) at (1.7, -1.4);
\coordinate [label=$H_{2,4}^t$] (u) at (2.3, -1.4);
\coordinate [label=$H_{2,5}^t$] (v) at (3.0, -1.2);
\coordinate [label=$H_{3,4}^t$] (w) at (3.8, 1.6);
\coordinate (x) at (4.8, 2.2);
\coordinate (y) at (2.7, 4.3);

\begin{scope}
\draw (0) -- (7);
\draw (1) -- (10);
\draw (2) -- (13);
\draw (3) -- (16);
\draw (4) -- (11);
\draw (5) -- (14);
\draw (6) -- (17);
\draw (8) -- (15);
\draw [dashed] (9) -- (18);
\draw [dashed] (12) -- (19);

\draw[-latex] (A) -- node {$v_{1,2}^{k}$} (B);
\draw[-latex] (A) -- node[left] {$v_{1,3}^{k}$} (C);
\draw[-latex] (A) -- node {$v_{1,4}^{k}$} (D);
\draw[-latex] (A) -- node {$v_{1,5}^{k}$} (E);
\draw[-latex] (B) -- (C);
\draw[-latex] (B) -- (D);
\draw[-latex] (B) -- (E);
\draw[-latex] (C) -- (D);
\draw[-latex] (C) -- (E);
\draw[-latex] (D) -- (E);

\end{scope}
\end{tikzpicture}

\caption{$K_\TT$-configuration $K_\TT(\A^t)$ of $\B(10, 3, \A^0).$ $v_{i,j}^{k}$ is a vector in $H_{i,j}^0$.}\label{fig:exB(10,3)}
\end{minipage}
\end{figure}

\noindent
Let's see a numerical example. Let us consider hyperplanes of equation $H_i^0: \alpha_i \cdot x = 0$, with $\alpha_i$, $i = 1, \dots, 11$ assigned as following: 
\begin{equation}
\begin{split}
& \alpha_1 = (0,0,1,1,0,1,-1,1), \alpha_2 = (0,0,0,1,1,1,1,-1), \alpha_3 = (0,0,1,0,0,0,1,1), \\
& \alpha_4 = (0,1,0,1,1,1,0,1), \alpha_5 = (0,2,0,-1,-1,0,1,-1), \alpha_6 = (0,-1,0,2,1,-1,-1,1), \\
& \alpha_7 = (1,0,0,1,0,-1,-1,1), \alpha_8 = (-1,0,0,0,2,1,1,1), \alpha_9 = (-4,0,0,0,1,-1,1,1), \\
& \alpha_{10} = (1,1,1,-1,-1,-1,-1,1), \alpha_{11} = (1,1,1,2,2,2,0,3). \\
\end{split}
\end{equation}
In this case, we have the $K_\TT$-vector set
\begin{equation*}
\{ v_{1,2}, v_{1,3}, v_{1,4} \} = \{ (1,0,0,0,0,0,0,0), (0,1,0,0,0,0,0,0), (0,0,-1,0,0,0,0,0) \} \quad .
\end{equation*}
The other vectors are obtained by means of relations $v_{2,3} = v_{1,3} - v_{1,2}, v_{2,4} = v_{1,4} - v_{1,2}, v_{3,4} = v_{1,4} - v_{1,3}$, that is
\begin{equation}
v_{2,3} = (-1,1,0,0,0,0,0,0), v_{2,4} = (-1,0,1,0,0,0,0,0), v_{3,4} = (0,-1,1,0,0,0,0,0) 
\end{equation}
and, finally, we get  $\alpha_{12} = (-2,-2,-2,3,4,-5,6,7)$ by imposing the condition that $\alpha_{12}$ has to be orthogonal to $v_{2,3}$ and $v_{2,4}$ . 
\end{ex}

\begin{ex}[$\B(16, 11, \A^0)$ with an intersection of multiplicity 4 in rank 3]\label{ex:MS(16,11)}
Let $L_1 = [16] \setminus \{ 13,14,15,16 \}, L_2 = [16] \setminus \{ 9,10,11,12 \}, L_3 = [16] \setminus \{ 5,6,7,8 \}$ and $L_4 = [16] \setminus \{ 1,2,3,4 \}$ be subsets of $[16]$ of $k+1 = 12$ indices. The set $\TT = \{ L_1, L_2, L_3, L_4 \}$ is a $4$-set. Let's consider a central generic arrangement $\A^0$ of 16 hyperplanes in $\CC^{11}$. In this case $m = n = 16, y=0$ and $m - k -  r = 16 - 11 - 4 = 1$, hence, by Theorem \ref{thm:main2} in order for $\A^0$ to be non-very generic we need two weakly linearly independent $K_\TT$-vector sets $\{ v_{1,2}^{1}, v_{1,3}^{1}, v_{1,4}^{1} \}$ and $\{ v_{1,2}^{2}, v_{1,3}^{2}, v_{1,4}^{2} \}$ that is the vectors $v_{2,3}^{k} \in \bigcap_{p \in L_2 \cap L_3 \setminus \{ 16 \}} H_p^0$ and $v_{2,4}^{k} \in \bigcap_{p \in L_2 \cap L_4 \setminus \{ 16 \}} H_p^0$, $k = 1,2$, have to belong to $H_{16}^0$\footnote{The graphic representation in this case can be simply obtained replacing the number $12$ with $16$ in Figure \ref{fig:exB(12,8)}.}. Notice that since $v_{3,4}^{k} = v_{2,4}^{k} - v_{2,3}^{k} \in \bigcap_{p \in L_2 \cap L_4 \setminus \{ 16 \}} H_p^0$,  if $v_{2,3}^{k}, v_{2,4}^{k} \in H_{16}^0$ then $v_{3,4}^{k} \in H_{16}^0$. That is all hyperplanes in $\A^0$ can be chosen freely, but $H_{16}^0$ which has to contain the vectors $v_{2,3}^{k}, v_{2,4}^{k}$, $k = 1,2$.\\
Let's see a numerical example. Let us consider hyperplanes of equation $H_i^0: \alpha_i \cdot x = 0$, with $\alpha_i$, $i = 1, \dots, 15$ assigned as following. 
\begin{equation}
\begin{split}
& \alpha_1 = (0,0,1,0,0,1,0,0,0,1,-1), \alpha_2 = (0,0,-1,0,0,1,1,1,1,-1,0), \alpha_3 = (0,0,2,0,0,1,1,0,1,1,0), \\
& \alpha_4 = (0,0,1,0,0,1,1,0,0,0,1), \alpha_5 = (0,-1,0,0,1,0,1,1,1,-1,0), \alpha_6 = (0,1,0,0,2,0,0,-1,-1,0,1), \\
& \alpha_7 = (0,2,0,0,-1,0,-1,0,0,1,1), \alpha_8 = (0,-1,0,0,2,0,1,1,1,0,0), \alpha_9 = (1,0,0,-3,0,0,-1,-1,1,1,1), \\
& \alpha_{10} = (2,0,0,5,0,0,1,-1,-1,1,1), \alpha_{11} = (3,0,0,1,0,0,1,-1,2,0,1), \alpha_{12} = (1,0,0,5,0,0,1,0,1,1,0), \\
& \alpha_{13} = (1,1,1,-3,-3,-3,-1,-3,2,-2,-1), \alpha_{14} = (1,1,1,0,0,0,-2,1,-8,1,1), \alpha_{15} = (0,0,0,-5,-5,-5,1,2,-3,-4,7). \\
\end{split}
\end{equation}
In this case, we have the $K_\TT$-vector sets
\begin{equation*}
\begin{split}
& \{ v_{1,2}^{1}, v_{1,3}^{1}, v_{1,4}^{1} \} = \{ (1,0,0,0,0,0,0,0,0,0,0), (0,1,0,0,0,0,0,0,0,0,0), (0,0,1,0,0,0,0,0,0,0,0) \} \quad ,  \\
& \{ v_{1,2}^{2}, v_{1,3}^{2}, v_{1,4}^{2} \} = \{ (0,0,0,1,0,0,0,0,0,0,0), (0,0,0,0,1,0,0,0,0,0,0), (0,0,0,0,0,1,0,0,0,0,0) \} \quad .
\end{split}
\end{equation*}
The other vectors are obtained by means of relations $v_{2,3}^{k} = v_{1,3}^{k} - v_{1,2}^{k}, v_{2,4}^{k} = v_{1,4}^{k} - v_{1,2}^{k}, v_{3,4}^{k} = v_{1,4}^{k} - v_{1,3}^{k}$, $k = 1,2$, that is
\begin{equation}
\begin{split}
& v_{2,3}^{1} = (-1,1,0,0,0,0,0,0,0,0,0), v_{2,4}^{1} = (-1,0,1,0,0,0,0,0,0,0,0), v_{3,4}^{1} = (0,-1,1,0,0,0,0,0,0,0,0) \quad , \\
& v_{2,3}^{2} = (0,0,0,-1,1,0,0,0,0,0,0), v_{2,4}^{2} = (0,0,0,-1,0,1,0,0,0,0,0), v_{3,4}^{2} = (0,0,0,0,-1,1,0,0,0,0,0) 
\end{split}
\end{equation}
and, finally, we get $\alpha_{16}$ = $(1,1,1,-2,-2,-2,5,6,7,8,9)$ by imposing the conditions that $\alpha_{16}$ has to be orthogonal to $v_{2,3}^{k}$ and $v_{2,4}^{k}$, $k = 1,2$. 
\end{ex}

\begin{ex}[$\B(10, 3, \A^0)$ with an intersection of multiplicity 5 in rank 4]\label{ex:MS(10,3)}
Let $L_1 = \{ 1,2,3,4 \}, L_2 = \{ 1,5,6,7 \}, L_3 = \{ 2,5,8,9 \}, L_4 = \{ 3,6,8,10 \}$ and $L_5 = \{ 4,7,9,10 \}$ be subsets of $[10]$ of $k+1 = 4$ indices. The set $\TT = \{ L_1, L_2, L_3, L_4, L_5 \}$ is a $5$-set. Let's consider a central generic arrangement $\A^0$ of 10 hyperplanes in $\CC^3$. In this case $m = n = 10,y=0$ and $m - k -  r = 10 - 3 - 5 = 2$, hence, by Theorem \ref{thm:main2} in order for $\A^0$ to be non-very generic we need three weakly linearly independent $K_\TT$-vector sets $\{ v_{1,2}^{1}, v_{1,3}^{1}, v_{1,4}^{1}, v_{1,5}^{1} \}$, $\{ v_{1,2}^{2}, v_{1,3}^{2}, v_{1,4}^{2}, v_{1,5}^{2} \}$ and $\{ v_{1,2}^{3}, v_{1,3}^{3}, v_{1,4}^{3}, v_{1,5}^{3} \}$ that is the vectors $v_{4,5}^{k}$, $k = 1,2,3$, have to belong to $H_{10}^0$  (see Figure \ref{fig:exB(10,3)}). Notice that since in this case hyperplanes are planes, then the three vectors $v_{i,j}^{k}, k=1,2,3$ will be linearly dependent for any choice of indices $(i,j),i\neq j$. This additional condition forces that at most 8 hyperplanes in $\A^0$ can be chosen freely, while both $H_{9}^0$ and $H_{10}^0$ have to contain the dependent vectors $v_{3,5}^{k}$ and $v_{4,5}^{k}$, $k = 1,2,3$, respectively.\\
Let's see a numerical example. Let us consider hyperplanes of equation $H_i^0: \alpha_i \cdot x = 0$, with $\alpha_i$, $i = 1, \dots, 8$ assigned as following. 
\begin{equation}
\begin{split}
& \alpha_1 = (0,10,3), \alpha_2 = (20,0,-9), \alpha_3 = (2,-3,0), \alpha_4 = (3,1,0), \\
&\alpha_5 = (0,0,1), \alpha_6 = (1,-1,1), \alpha_7 = (1,2,2), \alpha_8 = (4,-1,-3).
\end{split}
\end{equation}
In this case, we have the $K_\TT$-vector sets
\begin{equation*}
\begin{split}
& \{ v_{1,2}^{1}, v_{1,3}^{1}, v_{1,4}^{1}, v_{1,5}^{1} \} = \{ (1,-3,10), (\frac{9}{2},\frac{21}{2},10),(\frac{9}{2},3,\frac{25}{2}),(-\frac{77}{9},\frac{77}{3},-\frac{125}{9}) \}, \\
& \{ v_{1,2}^{2}, v_{1,3}^{2}, v_{1,4}^{2}, v_{1,5}^{2} \} = \{ (-2,6,-20),(-9,-47,-20),(-3,-2,-27),(-\frac{2}{3},2,-\frac{50}{3}) \}, \\
& \{ v_{1,2}^{3}, v_{1,3}^{3}, v_{1,4}^{3}, v_{1,5}^{3} \} = \{ (-3,3,-10),(-\frac{9}{2},-\frac{2391}{80},-10),(-\frac{1467}{1040},-\frac{489}{520},-\frac{16151}{1040}),(-\frac{4}{3},4,-\frac{71}{6}) \}.
\end{split}
\end{equation*}
The other vectors are obtained by means of relations $v_{i,l}^{k} = v_{1,l}^{k} - v_{1,i}^{k}$, where $2 \leq i<l \leq 5$, $k = 1,2,3$, that is
\begin{equation}
\begin{split}
& v_{2,3}^{1} = (\frac{7}{2},  \frac{27}{2},  0), v_{2,4}^{1} = (\frac{7}{2},  6,  \frac{5}{2}), v_{2,5}^{1} = (-\frac{86}{9},  \frac{86}{3},  -\frac{215}{9}), \\
& v_{3,4}^{1} = (0,  -\frac{15}{2},  \frac{5}{2}), v_{3,5}^{1} = (-\frac{235}{18},  \frac{91}{6},  -\frac{215}{9}), v_{4,5}^{1} = (-\frac{235}{18},  \frac{68}{3},  -\frac{475}{18}), \\ 
& v_{2,3}^{2} = (-7,  -53,  0), v_{2,4}^{2} = (-1,  -8,  -7), v_{2,5}^{2} = (\frac{4}{3},  -4,  \frac{10}{3}), \\
& v_{3,4}^{2} = (6,  45,  -7), v_{3,5}^{2} = (\frac{25}{3},  49,  \frac{10}{3}), v_{4,5}^{2} = (\frac{7}{3},  4,  \frac{31}{3}), \\
& v_{2,3}^{3} = (-\frac{3}{2},  -\frac{2631}{80},  0), v_{2,4}^{3} = (\frac{1653}{1040},  -\frac{2049}{520},  -\frac{5751}{1040}), v_{2,5}^{3} = (\frac{5}{3},  1,  -\frac{11}{6}), \\
& v_{3,4}^{3} = (\frac{3213}{1040},  \frac{6021}{208},  -\frac{5751}{1040}), v_{3,5}^{3} = (\frac{19}{6},  \frac{2711}{80},  -\frac{11}{6}), v_{4,5}^{3} = (\frac{241}{3120},  \frac{2569}{520},  \frac{11533}{3120}) \quad .
\end{split}
\end{equation}
Finally, we get $\alpha_9$ = $(314,-40,-197)$ and $\alpha_{10}$ = $(139,30,-43)$ by imposing the conditions that $\alpha_9$ and $\alpha_{10}$ have to be orthogonal to $v_{3,5}^{k}$ and $v_{4,5}^{k}$, $k = 1,2,3$. 
\end{ex}

\begin{rem}
Notice that Example \ref{ex:MS(10,3)} is slightly different from other examples for two reasons. 
Firslty, it uses a different combinatorics. In the Examples \ref{ex:MS(12,8)} and \ref{ex:MS(16,11)} the 4-sets $\TT = \{ L_1, L_2, L_3, L_4 \}$ are of the form $L_i = [n] \setminus K_i$ with $K_i$'s which satisfy the properties $\bigcup_{i=1}^4 K_i=[n]$ and $K_i \cap K_j = \emptyset$\footnote{Notice that this is a generalization of the combinatorics used in \cite{LS}} while in the Example \ref{ex:MS(10,3)} they are not. Secondly, in the Examples \ref{ex:MS(12,8)} and \ref{ex:MS(16,11)} in order to obtain non-very generic arrangement we could choose all hyperplanes freely but one, while in the Example \ref{ex:MS(10,3)} two hyperplanes had to be fixed as a result of the need of three weakly independent $K_\TT$-vector sets in two dimensional hyperplanes. Indeed this dependency condition gives rise to $27$ independent equations of the form 
\begin{equation}\label{eq:rem_dep2}
v_{i,j}^3 = \alpha v_{i,j}^1 + \beta v_{i,j}^2
\end{equation}
which fix the entries of the vectors $v_{1,i}^k, i = 3,4,5$ uniquely for any choice of three dependent vectors $v_{1,2}^k, k = 1,2,3$. Hence the vectors $v_{3,5}^k$ and $v_{4,5}^k$, $k = 1,2,3$ are determined and so are the two hyperplanes $H_9^0$ and $H_{10}^0$. 
\end{rem}

\begin{rem} Notice that both Example \ref{ex:MS(16,11)} and Example \ref{ex:MS(10,3)} satisfy Athanasiadis condition while the first one fails when the set $I$ has maximal cardinality. This essentially shows how the problem to describe the $r$-sets $\TT$ that can give rise to ( simple ) non-very generic intersections is non trivial. 
\end{rem}

\bigskip 

\noindent
\textbf{Competing interests:} The author(s) declare none .


\end{document}